\newtheorem{theorem}{Theorem}[section]
\newtheorem{lemma}[theorem]{Lemma}
\numberwithin{equation}{subsection}
\newtheorem{definition}[theorem]{Definition}
\renewcommand{\Re}{\mathrm{Re}}
\title{A criterion related to the Riemann Hypothesis}
\author{Helmut Maier and Michael Th. Rassias}
\date{\today}
\address{Department of Mathematics, University of Ulm, Helmholtzstrasse 18, 89081 Ulm, Germany.}
\email{helmut.maier@uni-ulm.de}
\address{Institute of Mathematics, University of Zurich, CH-8057, Zurich, Switzerland
 \& Institute for Advanced Study, Program in Interdisciplinary Studies,
1 Einstein Dr, Princeton, NJ 08540, USA.}
\email{michail.rassias@math.uzh.ch, michailrassias@math.princeton.edu}\thanks{}
\begin{document}

\maketitle
 
\begin{abstract} 
A crucial role in the Nyman-Beurling-B\'aez-Duarte approach to the Riemann Hypothesis is played by the distance  
\[
d_N^2:=\inf_{A_N}\frac{1}{2\pi}\int_{-\infty}^\infty\left|1-\zeta A_N\left(\frac{1}{2}+it\right)\right|^2\frac{dt}{\frac{1}{4}+t^2}\:,
\]
where the infimum is over all Dirichlet polynomials 
$$A_N(s)=\sum_{n=1}^{N}\frac{a_n}{n^s}$$
of length $N$.\\
In this paper we investigate $d_N^2$ under the assumption that the Riemann zeta function has four non-trivial zeros off the critical line. Thus we obtain a criterion for the non validity of the Riemann Hypothesis.\\ \\
\textbf{Key words:} Riemann hypothesis, Riemann zeta function, Nyman-Beurling-B\'aez-Duarte criterion.\\
\textbf{2000 Mathematics Subject Classification:} 30C15, 11M26
\newline

\end{abstract}
\section{Introduction}
The Nyman-Beurling-B\'aez-Duarte approach to the Riemann hypothesis asserts that the Riemann hypothesis is true, if and only if 
$$\lim_{N\rightarrow \infty} d_N^2=0\:,$$
where
\[
d_N^2:=\inf_{A_N}\frac{1}{2\pi}\int_{-\infty}^\infty\left|1-\zeta A_N\left(\frac{1}{2}+it\right)\right|^2\frac{dt}{\frac{1}{4}+t^2}\tag{1.1}
\]
and the infimum is over all Dirichlet polynomials 
$$A_N(s)=\sum_{n=1}^{N}\frac{a_n}{n^s}$$
of length $N$ (see \cite{bcf}).\\
Burnol \cite{burnol}, improving on work of B\'aez-Duarte, Balazard, Landreau and Saias \cite{baez1}, \cite{baez2} showed that 
$$\liminf_{N\rightarrow \infty} d_N^2 \log N\geq \sum_{Re(\rho)=\frac{1}{2}}\frac{m(\rho)^2}{|\rho|^2}\:,$$
where $m(\rho)$ denotes the multiplicity of the zero $\rho$.\\
This lower bound is believed to be optimal and one expects that
\[
d_N^2\sim \frac{1}{\log N}\sum_{Re(\rho)=\frac{1}{2}}\frac{m(\rho)^2}{|\rho|^2}\:.\tag{*}
\]
Under the Riemann hypothesis one has
\[
\sum_{Re(\rho)=\frac{1}{2}}\frac{m(\rho)}{|\rho|^2}=2+\gamma-\log 4\pi\:,\tag{1.2}
\]
where $\gamma$ is the Euler-Mascheroni constant.\\
S. Bettin, J. B. Conrey and D. W. Farmer \cite{bcf} prove (*) under an additional assumption and also identify the Dirichlet polynomials $A_N$, for which the expected infimum in (1.1) is assumed. They prove (Theorem 1 of \cite{bcf}):\\
\textit{Let 
$$V_N(s):=\sum_{n=1}^N\left(1-\frac{\log n}{\log N}\right)\frac{\mu(n)}{n^s}\:.$$
If the Riemann hypothesis is true and if
$$\sum_{|Im(\rho)|\leq T}\frac{1}{|\zeta'(\rho)|^2}\ll T^{\frac{3}{2}-\delta}$$
for some $\delta>0$, then
\[
\frac{1}{2\pi}\int_{-\infty}^\infty\left|1-\zeta V_N\left(\frac{1}{2}+it\right)\right|^2\frac{dt}{\frac{1}{4}+t^2}\sim\frac{2+\gamma-\log 4\pi}{\log N}\:.\tag{1.3}
\]}
In this paper we investigate the expression (1.3) under an assumption contrary to the Riemann hypothesis: There  are exactly four nontrivial zeros off the critical line. We observe that nontrivial zeros off the critical line always appear as 
quadruplets. Indeed, if $\zeta(\rho)=0$ for $\rho=\sigma+i\gamma$ with 
$1>\sigma>\frac{1}{2}$, $\gamma>0$, then from the functional equation
\[
\Lambda(s)=\Lambda(1-s)\:,\tag{1.4}
\]
where
$$\Lambda(s):=\pi^{-s/2}\Gamma\left(\frac{s}{2}\right)\zeta(s)$$
and the trivial relation $\zeta(\bar{s})=\overline{\zeta(s)}$, we obtain that 
$$\zeta(\sigma+i\gamma)=\zeta(1-\sigma+i\gamma)=\zeta(\sigma-i\gamma)=\zeta(1-\sigma-i\gamma)=0\:.$$
We prove the following:
\begin{theorem}\label{thm1}
Let $\sigma_0>1/2$, $\gamma_0>0$, $\zeta(\sigma_0\pm i\gamma_0)=\zeta(1-\sigma_0\pm i\gamma_0)=0$ and $\zeta(\sigma+i\gamma)\neq 0$ for all other
$\sigma+i\gamma$ with $\sigma>1/2$. Assume that
$$\sum_{|Im(\rho)|\leq T}\frac{1}{|\zeta'(\rho)|^2}\ll T^{\frac{3}{2}-\delta}\ \ (T\rightarrow\infty),$$
for some $\delta>0$. Then, there are constants $A=A(\sigma_0, \gamma_0)$ and
$B=B(\sigma_0, \gamma_0)\in\mathbb{R}$, such that for all $\epsilon>0$:
$$\frac{1}{2\pi}\int_{-\infty}^\infty\left|1-\zeta V_N\left(\frac{1}{2}+it\right)\right|^2\frac{dt}{\frac{1}{4}+t^2}$$
$$=\frac{1}{\log^2 N}\left(A N^{2\sigma_0-1}\cos(2\gamma_0\log N)+BN^{2\sigma_0-1}\right)\left(1+O(N^{\frac{1}{2}-\sigma_0+\epsilon})\right),$$
for some $\epsilon>0$.
\end{theorem}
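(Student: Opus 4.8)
The plan is to follow the method of Bettin--Conrey--Farmer that yields the estimate (1.3), tracking the new terms produced by the zeros off the critical line. The basis is the Mellin--Perron identity
$$V_N(s)=\frac{1}{\log N}\cdot\frac{1}{2\pi i}\int_{(c)}\frac{N^{w}}{\zeta(s+w)\,w^{2}}\,dw\qquad(\Re(s+w)>1),$$
which comes from $\sum_{n\ge1}\mu(n)n^{-s}=\zeta(s)^{-1}$ and $\frac{1}{2\pi i}\int_{(c)}x^{w}w^{-2}\,dw=(\log x)^{+}$. With $s=\tfrac12+it$, moving the contour to $\Re w=-\kappa$ for a fixed $\kappa$ with $0<\kappa<\sigma_0-\tfrac12$ picks up the double pole at $w=0$ (giving $\log N-\tfrac{\zeta'}{\zeta}(s)$) and the simple poles at $w=\rho-s$ from the nontrivial zeros $\rho$ with $\Re\rho>\tfrac12-\kappa$, i.e.\ all critical-line zeros together with $\rho_{\pm}:=\sigma_0\pm i\gamma_0$, but not $1-\sigma_0\pm i\gamma_0$ and not the trivial zeros. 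Controlling the horizontal segments at height $\pm T$ as $T\to\infty$ — here the hypothesis $\sum_{|\Im\rho|\le T}|\zeta'(\rho)|^{-2}\ll T^{3/2-\delta}$ enters, exactly as in \cite{bcf} — this yields the exact identity
$$1-\zeta(s)V_N(s)=\frac{1}{\log N}\Big(\frac{\zeta'}{\zeta}(s)-\zeta(s)\sum_{\rho}\frac{N^{\rho-s}}{\zeta'(\rho)(\rho-s)^{2}}-R_N(s)\Big),\qquad R_N(s)=\frac{\zeta(s)}{2\pi i}\int_{(-\kappa)}\frac{N^{w}}{\zeta(s+w)w^{2}}\,dw,$$
the $\rho$-sum being over the zeros just described. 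With only the critical-line zeros this is essentially the identity behind Theorem~1 of \cite{bcf}; what is new here is the appearance of $\rho_{\pm}$.

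Next I would substitute this into the integral and expand the square, writing $\log^{2}N\cdot I_N$ (where $I_N$ is the left-hand side of the assertion) as the sum of the squared $L^{2}\big(\mathbb R,\frac{dt}{2\pi(\frac14+t^{2})}\big)$-norms of $\frac{\zeta'}{\zeta}$, of $Z_N(s):=\zeta(s)\sum_{\rho}N^{\rho-s}\zeta'(\rho)^{-1}(\rho-s)^{-2}$ and of $R_N$, plus their cross terms. The function $\frac{\zeta'}{\zeta}$ and the critical-line part of $Z_N$ must be handled together, their poles on the line cancelling; this combination is exactly what \cite{bcf} evaluates, giving $(2+\gamma-\log4\pi)\log N+o(\log N)$, and it enters here only as part of the error, since $(\log N)^{-1}=O\big(N^{2\sigma_0-1}(\log^{2}N)^{-1}N^{\frac12-\sigma_0+\epsilon}\big)$. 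The key point is that $\rho_{\pm}$ contribute to $Z_N$ the piece $Z_N^{(0)}(s):=\zeta(s)\sum_{\pm}N^{\rho_{\pm}-s}\zeta'(\rho_{\pm})^{-1}(\rho_{\pm}-s)^{-2}$, whose coefficients $N^{\rho_{\pm}-\frac12}$ have modulus $N^{\sigma_0-\frac12}\to\infty$. Hence $\|Z_N^{(0)}\|^{2}$ is of size $N^{2\sigma_0-1}$, which dominates everything else: the critical-line part is $O(\log N)$, every mixed term pairing some $\rho_{\pm}$ with a critical zero or with $1-\sigma_0\pm i\gamma_0$ is $O(N^{\sigma_0-\frac12}\log N)$, and the $R_N$-terms are $O(N^{-\kappa})$. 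Therefore $\log^{2}N\cdot I_N=\|Z_N^{(0)}\|^{2}+O(N^{\sigma_0-\frac12+\epsilon})$.

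It then remains to evaluate $\|Z_N^{(0)}\|^{2}$ exactly. On $\Re s=\tfrac12$ one has $\bar s=1-s$, so the factor $N^{-it}$ cancels in $|Z_N^{(0)}|^{2}$, and, using $\rho_{-}=\overline{\rho_{+}}$ and $\zeta(\bar s)=\overline{\zeta(s)}$,
$$\|Z_N^{(0)}\|^{2}=\sum_{j,k\in\{+,-\}}\frac{N^{\rho_j+\overline{\rho_k}-1}}{\zeta'(\rho_j)\overline{\zeta'(\rho_k)}}\cdot\frac{1}{2\pi}\int_{-\infty}^{\infty}\frac{|\zeta(\tfrac12+it)|^{2}\,dt}{(\rho_j-\tfrac12-it)^{2}(\overline{\rho_k}-\tfrac12+it)^{2}(\tfrac14+t^{2})}.$$
The two diagonal terms ($j=k$) are equal and give $2N^{2\sigma_0-1}B_0$ with $B_0=|\zeta'(\rho_{+})|^{-2}\cdot\frac{1}{2\pi}\int\frac{|\zeta(\frac12+it)|^{2}\,dt}{|\rho_{+}-\frac12-it|^{4}(\frac14+t^{2})}>0$; the two off-diagonal terms are conjugates and give $2N^{2\sigma_0-1}\,\mathrm{Re}\big(C_0\,e^{2i\gamma_0\log N}\big)$, where $C_0=\zeta'(\rho_{+})^{-2}\cdot\frac{1}{2\pi}\int\frac{|\zeta(\frac12+it)|^{2}\,dt}{((\rho_{+}-\frac12)^{2}+t^{2})^{2}(\frac14+t^{2})}$. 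This last integral is evaluated by residues after analytically continuing $|\zeta(\tfrac12+it)|^{2}$ as $\zeta(\tfrac12+it)\zeta(\tfrac12-it)$: its only pole in a half-plane is at $t=i/2$ (coming from the pole of $\zeta$ at $1$), the apparent double poles at $t=i(\rho_{+}-\tfrac12)$ being removable because $\zeta(\rho_{+})=\zeta(1-\sigma_0-i\gamma_0)=0$ makes the numerator vanish there to second order; one finds $C_0$ explicitly in terms of $\zeta'(\rho_{+})$ and the quantity $\rho_{+}(\rho_{+}-1)$. Collecting terms gives $\|Z_N^{(0)}\|^{2}=N^{2\sigma_0-1}\big(A\cos(2\gamma_0\log N)+B\big)$ with $B=2B_0$ and real $A,B=B(\sigma_0,\gamma_0)$; moreover the AM--GM inequality (and $\gamma_0\ne0$) shows that the modulus of the integral defining $C_0$ is strictly smaller than that defining $B_0$, so $B>|A|$, the main term has exact order $N^{2\sigma_0-1}$, and the stated relative error $O(N^{\frac12-\sigma_0+\epsilon})$ follows.

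The hard part is not $\|Z_N^{(0)}\|^{2}$, which is a direct residue computation, but the part borrowed from \cite{bcf}: the exact cancellation of the poles of $\frac{\zeta'}{\zeta}$ against those of $Z_N$ on the critical line, the convergence and the $O(\log N)$ size of the ensuing double sum over pairs of critical zeros (which is where the assumption on $\sum_{|\Im\rho|\le T}|\zeta'(\rho)|^{-2}$ is really needed), and the uniform bound $R_N(\tfrac12+it)\ll N^{-\kappa}(1+|t|)^{O(1)}$. Once these are in place in the form already established by Bettin--Conrey--Farmer, the off-critical-line zeros influence $I_N$ only through $Z_N^{(0)}$, and the theorem follows.
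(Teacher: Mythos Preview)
Your proposal follows essentially the same route as the paper. Both arguments start from the residue expansion of $V_N(s)$ (your contour-shift identity is precisely Lemma~\ref{lem23}, which is Lemma~2 of \cite{bcf}), split the zero sum into the critical-line part and the off-line part (your $Z_N^{(0)}$ is $\zeta(s)\log N\cdot\Sigma^{(2)}(N,s)$ in the paper's notation, Definition~\ref{def26}), expand the product, control every term not involving the off-line zeros by importing the Bettin--Conrey--Farmer estimates verbatim, and extract the main term from the $\Sigma^{(2)}\times\Sigma^{(2)}$ piece.

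The differences are purely technical. You halt the $w$-contour at $\Re w=-\kappa$ with $0<\kappa<\sigma_0-\tfrac12$ and carry an integral remainder $R_N(s)$, whereas the paper (via \cite{bcf}) pushes the contour to $-\infty$, collecting the trivial zeros into $F_s(1/N)$ and the reflected zeros $1-\sigma_0\pm i\gamma_0$ into the residue sum (their contribution is then $O(N^{1/2-\sigma_0+\epsilon})$ and absorbed). You also stay on the line $\Re s=\tfrac12$ and rely on the cancellation of poles between $\zeta'/\zeta$ and the critical-line part of $Z_N$, while the paper first shifts to $\Re s=\tfrac12-\epsilon$ to sidestep those poles. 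Your explicit evaluation of $\|Z_N^{(0)}\|^2$---splitting into diagonal and off-diagonal pairs, observing that the apparent double poles at $t=\pm i(\rho_+-\tfrac12)$ are removable because $\zeta(\rho_+)=\zeta(1-\rho_+)=0$, and proving $B>|A|$ via Cauchy--Schwarz (what you call AM--GM)---goes well beyond what the paper writes out: the paper simply says the main term is obtained ``by evaluating the contribution with two factors $\Sigma^{(2)}$ and by observing that the integral in (1.3) is real.'' One small point: $2\,\mathrm{Re}(C_0e^{2i\gamma_0\log N})$ in general carries a phase, so strictly speaking the oscillatory piece is $A\cos(2\gamma_0\log N)+A'\sin(2\gamma_0\log N)$ rather than a pure cosine; this is an imprecision already present in the stated form of the theorem, not a flaw in your method.
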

\section{Preliminary Lemmas and Definitions}
\begin{lemma}\label{lem21}
Let $\epsilon>0$ be fixed but arbitrarily small. Under the assumptions of Theorem
\ref{thm1} we have 
\[
\zeta(\sigma+i t)\ll |t|^{3\epsilon}\ \ (|t|\rightarrow \infty)\:,\tag{2.1}
\]
for 
$$\frac{1}{2}-\epsilon\leq \sigma\leq \frac{1}{2}+\epsilon\:.$$

\end{lemma}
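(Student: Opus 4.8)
The plan is to exploit that, under the stated hypotheses, $\zeta$ is zero-free in the half-plane $\Re s>\tfrac12$ apart from the two zeros $\rho_0:=\sigma_0+i\gamma_0$ and $\overline{\rho_0}=\sigma_0-i\gamma_0$ (the reflected zeros $1-\sigma_0\pm i\gamma_0$ have real part $1-\sigma_0<\tfrac12$), and that moreover the hypothesis on $\sum 1/|\zeta'(\rho)|^2$ forces every zero, in particular $\rho_0$ and $\overline{\rho_0}$, to be simple. In this ``quasi-Riemann hypothesis'' situation the bound (2.1) should follow from Littlewood's classical derivation of the Lindel\"of hypothesis from the Riemann hypothesis, once the finitely many exceptional zeros have been divided out, together with the functional equation to handle the part of the strip with $\Re s<\tfrac12$.

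\emph{Step 1: removing the exceptional zeros.} Set
$$g(s):=\frac{s(s-1)\,\zeta(s)}{(s-\rho_0)(s-\overline{\rho_0})}\,.$$
Then $g$ is holomorphic in $\Re s>\tfrac12$ --- the simple pole of $\zeta$ at $s=1$ is cancelled by $s-1$, and the simple zeros at $\rho_0,\overline{\rho_0}$ by the denominator --- and $g$ has no zero there. For $|s|\to\infty$ the rational prefactor $s(s-1)/\big((s-\rho_0)(s-\overline{\rho_0})\big)$ equals $1+O(1/|s|)$, and for $\tfrac12\le\sigma\le2$, $|t|\ge|\gamma_0|+1$ it lies between two fixed positive constants; hence $g(\sigma+it)\asymp\zeta(\sigma+it)$ on that range, and it suffices to bound $g$. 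Since the half-plane $\Re s>\tfrac12$ is simply connected, $g$ is zero-free on it, and $g(\sigma)>0$ for real $\sigma>1$, there is a single-valued holomorphic branch of $\log g$ on $\Re s>\tfrac12$.

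\emph{Step 2: the Lindel\"of-type bound for $g$ on and near the line.} I would now run the standard chain of estimates for $\log g$ (compare Titchmarsh, \emph{The Theory of the Riemann Zeta-Function}, Ch.\ XIV). The ingredients are: $\log g(s)=O(1)$ for $\Re s\ge2$ (there $\log\zeta$ is bounded and the prefactor is $1+O(1/|s|)$); the crude convexity bound $\zeta(\sigma+it)\ll|t|$ uniform for $0\le\sigma\le2$, which yields $\Re\log g(\sigma+it)=\log|g(\sigma+it)|\le\log|t|+O(1)$ for $\tfrac12<\sigma\le\tfrac32$; the Borel--Carath\'eodory theorem, turning this one-sided bound on a circle $|s-(2+iT)|=R$ with $R$ just below $\tfrac32$ into $|\log g|\ll\log T$ on a slightly smaller disc; and the Hadamard three-circles theorem, interpolating between the circle of radius $\approx\tfrac12$ (where $\log g=O(1)$) and the large one. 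For a fixed abscissa $\sigma_1>\tfrac12$ this already gives $\log|g(\sigma_1+it)|\ll(\log t)^{\theta(\sigma_1)}$ with $\theta(\sigma_1)<1$; Littlewood's refinement, letting the abscissa shrink like $\sigma_1=\tfrac12+1/\log\log t$ and keeping track of the implied constants, then gives $\log|g(\tfrac12+it)|\ll\log t/\log\log t$, hence $g(\tfrac12+it)\ll_\epsilon|t|^{\epsilon}$ and, by the maximum principle applied between $\Re s=\tfrac12$ and $\Re s=2$, $g(\sigma+it)\ll_\epsilon|t|^{\epsilon}$ for $\tfrac12\le\sigma\le2$. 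By $g\asymp\zeta$ this is $\zeta(\sigma+it)\ll_\epsilon|t|^{\epsilon}$ for $\tfrac12\le\sigma\le2$. The whole chain is insensitive to the exceptional zeros, which enter only through the bounded prefactor.

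\emph{Step 3: the strip and conclusion.} For $\tfrac12-\epsilon\le\sigma<\tfrac12$ I would apply the functional equation $\zeta(s)=\chi(s)\zeta(1-s)$, where $\chi(s)=\pi^{s-1/2}\Gamma\big(\tfrac{1-s}{2}\big)\big/\Gamma\big(\tfrac s2\big)$; by Stirling $|\chi(\sigma+it)|\sim\big(|t|/2\pi\big)^{1/2-\sigma}\ll|t|^{\epsilon}$ for $\sigma\ge\tfrac12-\epsilon$, while $1-\sigma\in[\tfrac12,\tfrac12+\epsilon]$, so Step 2 and $\zeta(\overline{w})=\overline{\zeta(w)}$ give $|\zeta(1-\sigma-it)|\ll_\epsilon|t|^{\epsilon}$. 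Multiplying, $\zeta(\sigma+it)\ll_\epsilon|t|^{2\epsilon}\le|t|^{3\epsilon}$, and combined with Step 2 this is (2.1) on all of $\tfrac12-\epsilon\le\sigma\le\tfrac12+\epsilon$ (the factor $3$ merely a comfortable margin). The main obstacle is Step 2: the naive convexity and Borel--Carath\'eodory bounds give only a polynomial estimate, and getting down to a sub-polynomial one near the line genuinely requires the three-circles argument of Littlewood --- but that is classical and can be quoted verbatim once the reduction to the zero-free function $g$ of Step 1 has been made.
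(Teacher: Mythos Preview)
Your proposal is correct and follows essentially the same route as the paper: adapt Littlewood's classical derivation of Lindel\"of from RH (Borel--Carath\'eodory on the logarithm, then Hadamard three circles) to obtain $\zeta(\sigma+it)\ll|t|^{\epsilon}$ for $\sigma>\tfrac12$, and then use the functional equation together with Phragm\'en--Lindel\"of to cover the strip $\tfrac12-\epsilon\le\sigma\le\tfrac12+\epsilon$. The one cosmetic difference is in how the two exceptional zeros are neutralised: you divide them out by the rational factor $s(s-1)/\big((s-\rho_0)(s-\overline{\rho_0})\big)$ to obtain a function $g$ that is zero-free on the whole half-plane $\Re s>\tfrac12$, whereas the paper works directly with $\log\zeta$ on the half-plane with the horizontal slits $[\tfrac12\pm i\gamma_0,\sigma_0\pm i\gamma_0]$ (and $[\tfrac12,1]$) removed and simply takes $|t|$ large enough that the Borel--Carath\'eodory discs centred at $2+it$ miss the slits; both devices achieve the same end.
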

\begin{proof}
The estimate (2.1) is well known as the Lindel\"of hypothesis, which is a consequence of the Riemann hypothesis. In \cite{Titchmarsh}, the 
Lindel\"of hypothesis is proven on the assumption of the Riemann hypothesis.
This proof may be adapted to the new situation by slight modifications.\\
The function $\log \zeta(s)$ is holomorphic in the domain
$$\mathcal{G}:=\left\{\sigma+it\::\: \sigma>\frac{1}{2}\right\}\setminus\left\{\left[\frac{1}{2},1\right]\cup\left[\frac{1}{2}+i\gamma_0, \sigma_0+i\gamma_0\right]\cup \left[\frac{1}{2}-i\gamma_0, \sigma_0-i\gamma_0\right]\right\}\:.$$
Let now $\frac{1}{2}<\sigma^*\leq \sigma\leq 1$. As in \cite{Titchmarsh}, let $z=\sigma+it$, but
now $|t|$ sufficiently large.\\
We apply the Borel-Carath\'eodory theorem to the function $\log\zeta(z)$ and
the circles with centre $2+it$ and radius $\frac{3}{2}-\frac{1}{2}\delta$ and
$\frac{3}{2}-\delta$, ($0<\delta<\frac{1}{2}$).\\
On the larger circle
$$Re(\log \zeta(z))=\log |\zeta(z)|<A\log t$$
for a fixed positive constant $A$. Hence on the smaller circle 
$$|\log\zeta(z)|\leq \frac{3-2\delta}{\frac{1}{2}\delta}A\log t+\frac{3-\frac{3}{2}\delta}{\frac{1}{2}\delta}|\log|\zeta(2+it)||<A\delta^{-1}\log t\:.$$
We now apply Hadamard's three circle theorem as in \cite{Titchmarsh}. The 
proof there can be taken over without change, to obtain
\[
\zeta(z)=O(t^\epsilon),\ \text{for every $\sigma>\frac{1}{2}\:,$}\tag{2.2}
\]
which is (14.2.5) of \cite{Titchmarsh}.\\
By the functional equation (1.4) we obtain
\[
\left|\zeta\left(\frac{1}{2}-\epsilon+it\right)\right|=O\left(|t|^{3\epsilon}\right)\:.\tag{2.3}
\]
The claim (2.1) now follows from (2.2), (2.3) and the theorem of 
Phragm\'en-Lindel\"of.
\end{proof}
\begin{definition}\label{def22}
For $\rho$ a non-trivial zero of $\zeta(s)$ let 
$$R_N(\rho, s):=Res_{z=\rho}\frac{N^{z-s}}{\zeta(z)(z-s)^2}$$
and
$$F_s(z):=\pi z^s\sum_{n=1}^\infty(-1)^n\frac{(2\pi)^{2n+1}z^{2n}}{(2n)!\:\zeta(2n+1)(2n+s)^2}$$
\end{definition}
\begin{lemma}\label{lem23}
If $0<Re(s)<1$, then
$$V_N(s)=\frac{1}{\zeta(s)}\left(1-\frac{1}{\log N}\frac{\zeta'}{\zeta}(s)\right)+\frac{1}{\log N}\sum_{\rho}R_N(\rho, s)+\frac{1}{\log N}\:F_s\left(\frac{1}{N}\right)\:,$$
where the sum is over all distinct non-trivial zeros of $\zeta(s)$.
\end{lemma}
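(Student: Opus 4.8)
The plan is to realise $V_N(s)$ as a contour integral and then evaluate it by shifting the line of integration to the left, collecting the residues. Since $1-\frac{\log n}{\log N}=\frac{1}{\log N}\log(N/n)$ and the term $n=N$ contributes nothing, we have $V_N(s)=\frac{1}{\log N}\sum_{n\le N}\frac{\mu(n)}{n^{s}}\log(N/n)$. Using the classical exact identity $\frac{1}{2\pi i}\int_{(c)}\frac{y^{z}}{z^{2}}\,dz=\log y$ for $y\ge 1$ and $=0$ for $0<y\le 1$ (valid for every $c>0$), together with $\sum_n\mu(n)n^{-(z+s)}=1/\zeta(z+s)$, which converges absolutely for $\Re(z+s)>1$, one obtains, for any $c>1-\Re s$,
\[
V_N(s)=\frac{1}{\log N}\cdot\frac{1}{2\pi i}\int_{(c)}\frac{N^{z}}{z^{2}\,\zeta(z+s)}\,dz
=\frac{1}{\log N}\cdot\frac{1}{2\pi i}\int_{(c')}\frac{N^{w-s}}{(w-s)^{2}\,\zeta(w)}\,dw ,
\]
where in the last step we substituted $w=z+s$, so $c'=c+\Re s>1$; interchanging sum and integral is legitimate because the integrand on $\Re z=c$ is absolutely integrable (the factor $1/z^{2}$).

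Next I would shift the $w$-contour from $\Re w=c'$ to $\Re w=-\infty$, integrating along the boundaries of rectangles with horizontal sides at heights $\pm T_{j}$, where $T_{j}\to\infty$ is chosen so that the lines $\Im w=\pm T_{j}$ stay a fixed distance from every zero of $\zeta$, and with left sides at $\Re w=-(2M_{k}+1)\to-\infty$. Inside these rectangles the integrand has: a double pole at $w=s$ (assuming $\zeta(s)\ne 0$, which excludes only a discrete set of $s$, in particular a.e. on $\Re s=1/2$), simple-or-higher poles at the non-trivial zeros $\rho$ of $\zeta$, and simple poles at the trivial zeros $w=-2n$; note $w=1$ is a zero, not a pole, of $1/\zeta$, hence contributes nothing. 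A routine computation gives $\operatorname{Res}_{w=s}\frac{N^{w-s}}{(w-s)^{2}\zeta(w)}=\frac{\log N}{\zeta(s)}-\frac{\zeta'(s)}{\zeta(s)^{2}}$, which after division by $\log N$ is exactly the main term $\frac{1}{\zeta(s)}\bigl(1-\frac{1}{\log N}\frac{\zeta'}{\zeta}(s)\bigr)$; the residues at the non-trivial zeros are by definition the $R_{N}(\rho,s)$ of Definition \ref{def22}; and at a trivial zero, since $\zeta$ has a simple zero at $-2n$ with $\zeta'(-2n)=\frac{(-1)^{n}(2n)!}{2(2\pi)^{2n}}\zeta(2n+1)$, the residue equals $\frac{N^{-2n-s}}{(2n+s)^{2}\zeta'(-2n)}$, and summing over $n\ge 1$ reproduces precisely $F_{s}(1/N)$ of Definition \ref{def22}.

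The two points requiring real work — and the main obstacle — are the convergence of the residue series over the non-trivial zeros and the vanishing of the horizontal and far-left portions of the contour. For convergence, writing $R_{N}(\rho,s)=\frac{N^{\rho-s}}{(\rho-s)^{2}\zeta'(\rho)}$ for the (all but finitely many) simple zeros and applying Cauchy--Schwarz to $\sum_{T<|\Im\rho|\le 2T}\frac{1}{|\rho-s|^{2}|\zeta'(\rho)|}$, the hypothesis $\sum_{|\Im\rho|\le T}|\zeta'(\rho)|^{-2}\ll T^{3/2-\delta}$ together with the elementary bound $\sum_{T<|\Im\rho|\le 2T}1\ll T\log T$ yields absolute (and, on compact sets of $s$ avoiding zeros, uniform) convergence; the finitely many possibly multiple zeros $\sigma_0\pm i\gamma_0$, $1-\sigma_0\pm i\gamma_0$ contribute only finitely many terms and are harmless. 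For the vanishing pieces, on the left sides one uses the functional equation $\zeta(w)=\chi(w)\zeta(1-w)$ with $|\chi(w)|$ of polynomial growth in $|w|$ and $\zeta(1-w)\to 1$, so the integrand is dominated by $N^{-\Re w}$ times a decaying factor and the integral over $\Re w=-(2M_{k}+1)$ tends to $0$; on the horizontal segments one invokes Lemma \ref{lem21} for $\Re w\ge 1/2$ and the functional equation for $\Re w\le 1/2$, combined with the standard fact that for a suitable sequence $T_{j}$ one has $1/\zeta(\sigma+iT_{j})$ bounded by a fixed power of $T_{j}$ uniformly in the relevant range of $\sigma$ (the classical argument of \cite{Titchmarsh}, adapted to the present hypothesis exactly as in the proof of Lemma \ref{lem21}), so these contributions vanish as $T_{j}\to\infty$. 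Collecting the three families of residues then gives the claimed identity.
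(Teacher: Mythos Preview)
The paper does not actually prove this lemma: its entire proof is the sentence ``This is Lemma~2 of \cite{bcf}.'' Your argument --- write $V_N(s)=\frac{1}{\log N}\cdot\frac{1}{2\pi i}\int_{(c')}\frac{N^{w-s}}{(w-s)^2\zeta(w)}\,dw$ via the Perron-type kernel $y^z/z^2$, shift the contour to the left, and collect the double pole at $w=s$, the non-trivial zeros (giving the $R_N(\rho,s)$), and the trivial zeros (giving $F_s(1/N)$) --- is precisely the standard derivation and is what \cite{bcf} does, so your approach coincides with the one the paper defers to.

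Two small remarks. First, the lemma as stated carries no hypothesis on $\sum_\rho|\zeta'(\rho)|^{-2}$; you invoke this for \emph{absolute} convergence of $\sum_\rho R_N(\rho,s)$, which is more than is needed. The identity holds once the sum is interpreted as $\lim_{j\to\infty}\sum_{|\Im\rho|<T_j}$ along the usual sequence $T_j$ avoiding ordinates of zeros --- exactly the limit that arises from the contour shift --- so the extra hypothesis can be dropped from this step (though it is of course in force for the rest of the paper). Second, on the far-left vertical side your phrasing ``$|\chi(w)|$ of polynomial growth'' is backwards: along $\Re w=-(2M+1)$ one has $|\zeta(w)|=|\chi(w)|\,|\zeta(1-w)|$ with $|\chi(w)|$ large, hence $|1/\zeta(w)|$ small, which combined with $N^{\Re w-\Re s}\to 0$ is what makes that piece vanish. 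This is a wording slip, not a mathematical one.
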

\begin{proof}
This is Lemma 2 of \cite{bcf}.
\end{proof}
\begin{lemma}\label{lem24}
Let $\epsilon >0$. Under the assumptions of Theorem \ref{thm1} we have 
$$\sum_{\rho, Re(\rho)=\frac{1}{2}}R_N(\rho, s)\ll N^{\mp\epsilon}|s|^{\frac{3}{4}-\frac{\delta}{2}+\epsilon}\:.$$
\end{lemma}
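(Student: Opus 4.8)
The plan is to estimate the sum $\sum_{\rho,\,\mathrm{Re}(\rho)=1/2}R_N(\rho,s)$ by splitting each residue term into its two natural pieces and bounding each over the set of critical zeros. First I would write out $R_N(\rho,s)$ explicitly. Since every critical zero of $\zeta$ is simple under the hypotheses of Theorem \ref{thm1} (the only possible multiple zeros lie off the line, and by assumption there are none besides the prescribed quadruplet, which is also simple), the double pole in $z$ comes only from $(z-s)^{-2}$, so
\[
R_N(\rho,s)=\mathrm{Res}_{z=\rho}\frac{N^{z-s}}{\zeta(z)(z-s)^2}
=\frac{N^{\rho-s}}{\zeta'(\rho)}\left(\frac{\log N}{\rho-s}-\frac{1}{(\rho-s)^2}-\frac{\zeta''(\rho)}{2\zeta'(\rho)(\rho-s)}\right).
\]
(The precise shape of the lower-order terms is irrelevant; what matters is that each is $N^{\rho-s}/\zeta'(\rho)$ times a rational function of $\rho-s$ whose size is controlled by powers of $\log N$ and of $|\rho-s|^{-1}$.) Writing $\rho=\tfrac12+i\gamma$ and $s=\sigma+it$ with $0<\sigma<1$, we have $|N^{\rho-s}|=N^{1/2-\sigma}$, which is the source of the $N^{\mp\epsilon}$ factor once $\sigma$ is taken in a strip of width $O(\epsilon)$ around $\tfrac12$.

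Next I would bound the sum over $\gamma$. Group the zeros dyadically: for $|\gamma-t|\asymp 2^k$ (and separately the finitely many zeros with $|\gamma-t|\ll 1$), the denominators $|\rho-s|^{-1}$ and $|\rho-s|^{-2}$ are $\ll 2^{-k}$, while by Cauchy–Schwarz
\[
\sum_{|\gamma-t|\asymp 2^k}\frac{1}{|\zeta'(\rho)|}
\le\Bigl(\sum_{|\gamma-t|\asymp 2^k}1\Bigr)^{1/2}
\Bigl(\sum_{|\gamma-t|\asymp 2^k}\frac{1}{|\zeta'(\rho)|^2}\Bigr)^{1/2}.
\]
The first factor is $\ll (2^k\log(2^k+|t|))^{1/2}$ by the classical zero-counting estimate, and the second factor is controlled by the hypothesis $\sum_{|\mathrm{Im}(\rho)|\le T}|\zeta'(\rho)|^{-2}\ll T^{3/2-\delta}$, which over the block $|\gamma|\le |t|+2^k$ gives $\ll (|t|+2^k)^{3/2-\delta}$. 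Multiplying by the $2^{-k}$ from the denominator and summing the geometric-type series over $k$ (the terms with larger $k$ contribute more, so the tail dominates and is balanced against the range $2^k\ll |t|^{1+\epsilon}$, beyond which one uses convexity to truncate) yields a bound of the shape $|t|^{3/4-\delta/2+\epsilon}$ after the square roots are combined. Incorporating the $N^{1/2-\sigma}=N^{\mp\epsilon}$ prefactor and the $\log N$ from the residue computation (absorbed into $N^\epsilon$, or rather harmless since it is a power of $\log$) gives exactly the claimed estimate $\ll N^{\mp\epsilon}|s|^{3/4-\delta/2+\epsilon}$.

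The main obstacle I expect is the bookkeeping in the dyadic sum: one must be careful that the hypothesis on $\sum|\zeta'(\rho)|^{-2}$ is a bound over an initial segment $|\mathrm{Im}(\rho)|\le T$, not over a dyadic annulus, so to estimate $\sum_{|\gamma-t|\asymp 2^k}|\zeta'(\rho)|^{-2}$ one simply bounds it by the full sum up to $T=|t|+2^{k+1}$, and then the $2^{-k}$ decay from the denominator must beat the possible growth. A clean way to organize this is to first handle the near-diagonal zeros $|\gamma-t|\le |t|^{1/2}$ (finitely many in number relative to the relevant scales, handled by the crude pointwise bound together with Cauchy–Schwarz on a single block), and then the far zeros where the $2^{-k}$ and $2^{-2k}$ factors make the series converge rapidly in $k$; summing the two contributions and optimizing the split point produces the exponent $3/4-\delta/2$. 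One should also note that the sign in $N^{\mp\epsilon}$ tracks whether $\sigma\ge\tfrac12$ or $\sigma\le\tfrac12$; in the regime relevant to $s=\tfrac12+it$ shifted slightly, both cases are covered by $|N^{\rho-s}|=N^{\pm\epsilon}$, and the statement records this with the $\mp$.
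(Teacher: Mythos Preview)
Your residue computation is incorrect. At a simple critical zero $\rho$, the function $N^{z-s}/\bigl(\zeta(z)(z-s)^2\bigr)$ has only a \emph{simple} pole at $z=\rho$: the factor $(z-s)^{-2}$ produces a double pole at $z=s$, not at $z=\rho$. Hence
\[
R_N(\rho,s)=\frac{N^{\rho-s}}{\zeta'(\rho)(\rho-s)^2},
\]
with no $\log N$ or $\zeta''(\rho)$ terms. This in fact simplifies your argument: the $\log N$ factor you were planning to absorb into $N^{\epsilon}$ never arises, and only the single denominator $|\rho-s|^{-2}$ must be controlled.

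With this correction, the remainder of your strategy is sound and is essentially the argument the paper invokes (by citing Lemma~3 of \cite{bcf}): extract $|N^{\rho-s}|=N^{\mp\epsilon}$ for $\mathrm{Re}(s)=\tfrac12\pm\epsilon$; split off the far zeros $|\rho-s|\ge|\rho|/2$, whose total contribution is $O(1)$ after a dyadic summation using the zero-counting estimate together with the hypothesis on $\sum|\zeta'(\rho)|^{-2}$; and for the near zeros apply Cauchy--Schwarz in the form
\[
\sum_{|\rho-s|<|\rho|/2}\frac{1}{|\zeta'(\rho)|\,|\rho-s|^2}
\le\Bigl(\sum_{|\mathrm{Im}(\rho)|\ll|s|}\frac{1}{|\zeta'(\rho)|^2}\Bigr)^{1/2}
\Bigl(\sum_{\rho}\frac{1}{|\rho-s|^4}\Bigr)^{1/2}.
\]
The first factor is $\ll|s|^{3/4-\delta/2}$ by hypothesis, and the second is $\ll|s|^{\epsilon}$ since $\mathrm{Re}(s)\ne\tfrac12$ and the zeros have logarithmic density near height $|s|$. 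This gives the stated bound directly; your more elaborate dyadic scheme is not needed once the residue is written correctly, though it is not wrong in spirit.
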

\begin{proof}
The proof is identical to the proof of Lemma 3 of \cite{bcf}. There the summation
condition $Re(s)=1/2$ is not needed, since the Riemann hypothesis is assumed.
\end{proof}
\begin{lemma}\label{lem25}
$$N^{\pm\epsilon}\sum_{\rho, |\rho|=\frac{1}{2}}R_N(\rho, s)\ll\sum_{\substack{|\rho-s|<\frac{|\rho|}{2}\\Re(\rho)=\frac{1}{2} }}\frac{1}{|\zeta'(\rho)||\rho-s|^2}+1\:.$$
\end{lemma}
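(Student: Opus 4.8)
The plan is to pass to the simple-zero residue formula and then split the sum over $\rho$ according to whether $\rho$ is close to $s$. First, the standing hypothesis $\sum_{|\mathrm{Im}(\rho)|\le T}|\zeta'(\rho)|^{-2}\ll T^{3/2-\delta}$ forces every non-trivial zero $\rho$ with $\Re(\rho)=1/2$ to be simple (otherwise $\zeta'(\rho)=0$ and the sum diverges), so that $R_N(\rho,s)=N^{\rho-s}/(\zeta'(\rho)(\rho-s)^2)$ and hence
\[
|R_N(\rho,s)|=\frac{N^{1/2-\Re(s)}}{|\zeta'(\rho)|\,|\rho-s|^2}.
\]
On the range of $s$ relevant to Theorem \ref{thm1} one has $\Re(s)=\tfrac12\pm\epsilon$, and the factor $N^{\pm\epsilon}$ on the left of the asserted inequality is chosen precisely so that $N^{\pm\epsilon}N^{1/2-\Re(s)}\ll1$. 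It therefore suffices to prove
\[
\sum_{\Re(\rho)=1/2}\frac{1}{|\zeta'(\rho)|\,|\rho-s|^2}\ \ll\ \sum_{\substack{|\rho-s|<|\rho|/2\\ \Re(\rho)=1/2}}\frac{1}{|\zeta'(\rho)|\,|\rho-s|^2}\ +\ 1,
\]
that is, that the contribution of the \emph{far} zeros, those with $|\rho-s|\ge|\rho|/2$, is $O(1)$ uniformly in $s$.

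For the far zeros I would split dyadically according to $|\mathrm{Im}(\rho)|$. In a block $T\le|\mathrm{Im}(\rho)|<2T$, with $T$ a power of $2$ at least as large as the ordinate of the first zero, one has $|\rho|\asymp T$, so a far zero in the block satisfies $|\rho-s|\ge|\rho|/2\gg T$ and hence contributes at most $T^{-2}|\zeta'(\rho)|^{-1}$, a bound independent of $s$. By Cauchy--Schwarz,
\[
\sum_{T\le|\mathrm{Im}(\rho)|<2T}\frac{1}{|\zeta'(\rho)|}\ \le\ \Big(\sum_{T\le|\mathrm{Im}(\rho)|<2T}\frac{1}{|\zeta'(\rho)|^2}\Big)^{1/2}\Big(\#\{\rho:\,T\le|\mathrm{Im}(\rho)|<2T\}\Big)^{1/2}.
\]
The first factor is $\ll T^{3/4-\delta/2}$ by the hypothesis of Theorem \ref{thm1}, and the second is $\ll(T\log T)^{1/2}$ by the Riemann--von Mangoldt counting formula, so the block contributes $\ll T^{-2}\cdot T^{3/4-\delta/2}(T\log T)^{1/2}=T^{-3/4-\delta/2}(\log T)^{1/2}$. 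Summing over the dyadic values of $T$ gives a convergent total, $O(1)$; the finitely many zeros of small ordinate not covered by a block each contribute $O(|\zeta'(\rho)|^{-1})=O(1)$. Altogether the far-zero sum is $O(1)$, which is exactly what is needed.

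There is no genuine analytic obstacle here: the estimate is a routine consequence of the zeta-zero counting function together with the assumed mean-square bound for $1/|\zeta'(\rho)|$, and in fact the dyadic series converges even without using $\delta>0$. The points requiring the most care are purely bookkeeping: matching the sign of $\epsilon$ in $N^{\pm\epsilon}$ against the position of $\Re(s)$ relative to $1/2$, verifying that the final exponent $-3/4-\delta/2$ of $T$ is negative for every admissible $\delta$, and noting that any zero very close to $s$ is automatically classified as \emph{near}, since $|\rho|$ is bounded below by a positive absolute constant and therefore $|\rho-s|$ small forces $|\rho-s|<|\rho|/2$.
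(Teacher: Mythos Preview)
Your argument is correct. The paper does not supply an independent proof of this lemma at all; its entire ``proof'' is the one-line citation ``This is (5) of \cite{bcf}.'' What you have written is the natural self-contained argument: use simplicity of the critical-line zeros (forced by the hypothesis on $\sum|\zeta'(\rho)|^{-2}$) to get the explicit residue $R_N(\rho,s)=N^{\rho-s}/(\zeta'(\rho)(\rho-s)^2)$, absorb the factor $N^{1/2-\Re(s)}$ with the $N^{\pm\epsilon}$ on the left, keep the near zeros $|\rho-s|<|\rho|/2$ as they are, and bound the far zeros by a dyadic Cauchy--Schwarz against the assumed mean square and Riemann--von Mangoldt. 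This is essentially the computation underlying the cited estimate in Bettin--Conrey--Farmer, so your approach matches the intended one; you have simply unpacked the reference. Two incidental remarks: you have silently (and correctly) read the summation condition ``$|\rho|=\tfrac12$'' as ``$\Re(\rho)=\tfrac12$'', which is clearly what is meant; and your observation that the dyadic series already converges with exponent $-3/4$ even before invoking $\delta>0$ is accurate and worth keeping.
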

\begin{proof}
This is (5) of \cite{bcf}.
\end{proof}
\begin{definition}\label{def26}
We set
$$\Sigma^{(1)}(N,s):=\frac{1}{\log N}\sum_{\rho:Re(\rho)=\frac{1}{2}}R_N(\rho, s)$$
and
$$\Sigma^{(2)}(N,s):=\frac{1}{\log N}\sum_{\rho\in\{\sigma_0\pm i\gamma_0\}}R_N(\rho, s)$$
\end{definition}
\section{Proof of Theorem \ref{thm1}}
We closely follow the proof of Theorem 1 in \cite{bcf}. We have
\begin{align*}
\frac{1}{2\pi}\int_{-\infty}^\infty\left|1-\zeta V_N\left(\frac{1}{2}+it\right)\right|^2\frac{dt}{\frac{1}{4}+t^2}&=\frac{1}{2\pi i}\int_{(\frac{1}{2})}(1-\zeta V_N(s))(1-\zeta V_N(1-s))\frac{ds}{s(1-s)}\\
&=\frac{1}{2\pi i}\int_{(\frac{1}{2}-\epsilon)}(1-\zeta V_N(s))(1-\zeta V_N(1-s))\frac{ds}{s(1-s)}
\end{align*}
By Lemma \ref{lem23} and Definition \ref{def26} this is 
\begin{align*}
&\ \ \frac{1}{\log^2 N}\:\frac{1}{2\pi i}\int_{(\frac{1}{2}-\epsilon)}\left(\frac{\zeta'}{\zeta^2}(s)-\Sigma^{(1)}(N,s)-\Sigma^{(2)}(N,s)-F_s\left(\frac{1}{N}\right)\right)\times\tag{3.1}\\
&\left(\frac{\zeta'}{\zeta^2}(1-s)-\Sigma^{(1)}(N,1-s)-\Sigma^{(2)}(N,1-s)-F_{1-s}\left(\frac{1}{N}\right)\right) \frac{\zeta(s)\zeta(1-s)}{s(1-s)}\:ds\:.
\end{align*}
We now expand the product in (3.1) and separately estimate the products that do not contain terms $\Sigma^{(2)}$ and the products consisting of a term
$\Sigma^{(2)}$ and another term.\\
The asymptotic finally is obtained by asymptotically evaluating the products consisting only of factors $\Sigma^{(2)}$.\\
We closely follow \cite{bcf}. It follows from Lemmas \ref{lem21}, \ref{lem23},
\ref{lem24} that 
\begin{align*}
&\ \ \frac{1}{\log^2 N}\:\frac{1}{2\pi i}\int_{(\frac{1}{2}-\epsilon)}\sum_{\rho_1, \rho_2} R_N(\rho_1, s)R_N(\rho_2, 1-s)\frac{\zeta(s)\zeta(1-s)}{s(1-s)}\: ds\\
&\ll\frac{1}{\log^2 N}\int_{(\frac{1}{2}-\epsilon)}\sum_{|\rho-s|<\frac{|\rho|}{2}}\frac{1}{|\zeta'(\rho)||\rho-s|^2}\frac{|ds|}{|s|^{\frac{5}{4}+\frac{\delta}{2}-5\epsilon}}+O\left(\frac{1}{\log^2 N}\right)\:.\\
\end{align*}
Now by Lemma \ref{lem24} and the trivial estimate 
$$F_s\left(\frac{1}{N}\right)=O(N^{-5/2})\:,$$
all the other terms in (3.1) not containing factors $\Sigma^{(2)}$ are trivially 
$O(1/\log^2 N)$ apart from 
\begin{align*}
-\frac{1}{\log^2 N}\:\frac{1}{2\pi i}\int_{(\frac{1}{2}-\epsilon)}\frac{\zeta'}{\zeta}(1-s)\Sigma^{(1)}(N,s)\frac{\zeta(s)\zeta(1-s)}{s(1-s)}\: ds&=\log N-\frac{1}{2}\frac{\zeta''}{\zeta'}(\rho)+\frac{\chi'}{\chi}(\rho)+\frac{1-2\rho}{|\rho|^2}\\
&=\frac{\log N}{|\rho|^2}+O\left(\frac{1}{|\rho|^{2-\epsilon}|\zeta'(\rho)|}+\frac{1}{|\rho|^2} \right)\:,
\end{align*}
where we set 
$$\chi(s):=\pi^{-s/2}\Gamma\left(\frac{s}{2}\right)$$ 
and use the bound 
$$\zeta''\left(\frac{1}{2}+it\right)\ll |t|^\epsilon\:,$$
which follows from Lemma \ref{lem21} by the well-known estimate for the derivatives of a holomorphic function. By moving the line of integration to 
$Re(s)=\frac{1}{2}+\epsilon$, we get that the contribution from the products
not containing $\Sigma^{(2)}$ is
$$\frac{1}{\log N}\sum_{\rho:Re(\rho)=\frac{1}{2}}\frac{1}{|\rho|^2}+O\left(\frac{1}{\log^2 N}\right)\:.$$
We now come to the products that contain the factor $\Sigma^{(2)}(N,s)$.\\
They may be handled by adding the factor $N^{\sigma_0-\frac{1}{2}+\epsilon}$
stemming from $N^{z-s}$ in Definition \ref{def22}.\\
These estimates yield the error-term in Theorem \ref{thm1}. The main term finally is obtained by evaluating the contribution with two factors $\Sigma^{(2)}$
and by observing that the integral in (1.3) is real. \\
\qed
\vspace{10mm}

\end{document}